\newtheorem{theorem}{Theorem}
\newtheorem{lemma}{Lemma}
\newtheorem{proposition}{Proposition}
\newtheorem{definition}{Definition}
\newtheorem*{not*}{Notation}
\newtheorem{remark}{Remark}
\def\N{\mathbb{N}}
\def\ra{\rightarrow}
\def\R{\mathbb{R}}
\def\veps{\varepsilon}
\def\M{\mathbb E}
\begin{document}
	\title{A note on the asymptotics of random density matrices}

	\author{\Large{M. Kornyik}$^{1,2}$  \\ $^1$ E\"otv\"os Lor\'and University \\ Department of Probability Theory and 
		Statistics \\ P\'azm\'any P\'eter s\'et\'any 1/C., H-1117, Budapest, Hungary \\
		\textit{email}:koma@cs.elte.hu        \and 
		$^2$ Wigner Research Centre for Physics \\
		Quantum optics and Quantum Information Group \\
		Konkoly-Thege Mikl\'os \'ut  29-33., H-1121, Budapest, Hungary
		}

	\maketitle

\begin{abstract}

We show in this note that the asymptotic spectral distribution, location and distribution of the largest eigenvalue of a large class of random density matrices coincide with that of Wishart-type random matrices using proper scaling. As an application, we show that the asymptotic entropy production rate is logarithmic. These results are generalizations of those of Nechita, and Sommers and \. Zyczkowski.

\end{abstract}
\textbf{Keywords}: Random matrices, Quantum information theory, Marchenko Pastur law, Tracy Widom law, von Neumann entropy \\
\textbf{MSC}: 60B20, 94A15

\section{Introduction}
\label{intro}
\vspace{3mm}
Density matrices are fundamental tools of quantum mechanics and quantum information theory for describing the state of a quantum system (\cite{alicki2001quantum,nielsen2002quantum,wilde2013quantum}). While the theory of deterministic density matrices is well developed, random density matrices have not been considered by many before (\cite{Nechita,sommers2004statistical,Zyczkowski}). These matrices are particularly useful when the state of the system is either unknown or just partially known. 
Random (density) matrices also appear in tomography (\cite{GToth}), when the matrix elements come from measurements, although in this case the semi-definiteness can happen to fail.
 Due to the randomness quantities like the entropy or entanglement cannot be computed exactly, they have to be estimated, and in order to do this a probability measure has to be introduced on the set of density matrices. 
 While there is a uniquely defined uniform distribution on the set of pure states, since these are the rays of a Hilbert space, for density matrices, i.e. mixed states, there is no candidate for a canonical measure. \\
 There are two main classes of probability measures on the set of density matrices (described in more details in \cite{Zyczkowski}). The first class consists of metric measures, which are generated by metrics on the set of density matrices, e.g. the Bures distance defined by the metric $d(\rho,\sigma)=2\arccos \mathrm{Tr}(\rho^{1/2}\sigma\rho^{1/2})^{1/2}$. The second class consists of the induced measures, where density matrices are obtained by partially tracing a random pure state of a larger system. The topic of this note is confined to random density matrices of the second class. \\ In order to describe the second class assume a quantum system is in some pure state \\ $|X\rangle \in \mathcal H\otimes\mathcal K$, where $\mathcal H$ is a $p$ dimensional Hilbert space of the observer, and $\mathcal K$ is an $n$ dimensional Hilbert space representing the (unknown) environment. The state of the system in the observer's space is given by $\rho = \mathrm{Tr}_{\mathcal K} |X\rangle \langle X|$, i.e. the partial trace of $|X\rangle$ with respect to $\mathcal K$. It can  be shown that $\rho  $ has the form 
 $ XX^\dagger /\mathrm{Tr}(XX^\dagger)=XX^\dagger/ ||X||_{HS}^2$, where $X$ is a $p\times n$ matrix, $||\cdot||_{HS}$ denotes the Hilbert-Schmidt norm, and $X^\dagger$ denotes the conjugated transpose of $X$ (for more details on the tensor analytic and matrix algebraic description see \cite{joshi1995matrices}). Due to the fact that $\mathcal K$ is unknown, there is a degree of freedom in choosing the distribution of $|X\rangle$. In case the distribution of $|X\rangle$ is invariant under unitary conjugation, it can be shown that the elements of the matrix $X$ are independent and normally distributed complex random variables. By analyzing the asymptotic behavior of density matrices we can get a useful insight on large quantum systems, i.e. when $\mathcal H\otimes \mathcal K$ is of large dimension. Since the aforementioned random density matrices are functions of the generating $X$, or more precisely, functions of $XX^\dagger$, it is reasonable to analyze the spectral asymptotics of $XX^\dagger$.

\noindent The theory of the asymptotics of positive semidefinite matrices of the form $XX^\dagger$ is well established. It is known that after the proper scaling the limit of the spectral distribution is given by the compactly supported Marchenko-Pastur law and the limit distribution of the largest eigenvalue is governed by the Tracy-Widom law under quite general conditions (see eg. \cite{bai2004clt,bao,forrester1993spectrum, karoui2007tracy,mar67,nagao1995asymptotic,pastur2011eigenvalue}). Furthermore, after proper scaling the largest and smallest eigenvalues converge to the respective edge of the support with probability one. Some results from this theory can be translated to the case of random density matrices of the previously mentioned type. Nechita showed in \cite{Nechita} that after proper scaling and under the assumption that the elements of $X$ are independent with standard normal distribution the limit laws coincide with those of $XX^\dagger$.  In Section 2 we will generalize results of Nechita for a larger class of random density matrices, while Section 3 consists of the proofs of the generalized theorems. \\
Sommers and \. Zyczkowski computed the two point correlation functions of a random density matrix of the second type from the invariant ensemble. They have also shown asymptotic results for the mean of the von Neumann entropy for a special class of random density matrices (\cite{sommers2004statistical}). Section 4 generalizes their results. 
 \section{Spectral asymptotics of general random density matrices}
 First, for the sake of completeness, let us introduce some notation and definitions.
 \begin{definition}
 	The matrix $\rho\in \mathbb C^{n\times n}$ is a density matrix, if it is positive semi-definite (denoted by $\rho \geq 0$) and $\mathrm{Tr}\ \rho=1$. 
 \end{definition}
 \noindent Given any $x\in \R$, denote by $\delta_x$ the Dirac measure concentrated at $x$. 
 \begin{definition}
 	Let $c>0$ and denote by $\nu_c$ the Marchenko-Pastur law of parameter $c$, i.e. let 
 	$$ \nu_c=(1-1/c)\mathbb I_{[1,\infty)}(c) \delta_0 + \kappa_c $$
	and 
 	$$ d\kappa_c(x)= \frac{1}{2\pi c x}\sqrt{(x_+-x)(x-x_-)}\cdot \mathbb I_{[x_-,x_+]}(x) dx $$
 	with $x_\pm=(\sqrt c\pm1)^2.$ For a given set $A\subset \R$ we will denote its indicator function by $\mathbb I_A(x)$.
 \end{definition}
 \noindent Given any Hermitian matrix $A$ we will denote its $j^{th}$ largest eigenvalue by $\lambda_j(A)$. \\
 
\noindent  We note that the density matrix is in close relation with the density operator, i.e. a linear, bounded operator of a Hilbert space with trace equal to 1. It can be shown that in finite dimensional Hilbert spaces there is an equivalence between these two objects.\\

	\noindent  Now let us recall Nechita's results (\cite{Nechita}) about the asymptotics of random density matrices. 
	 Let $X=(x_{kl})_{1\leq k,l} $ be a family of independent, identically distributed (from this point on abbreviated as IID) random variables with standard complex Gaussian distribution $N_{\mathbb C}(0,1)$. Assume $p=p(n)$ is such that $\lim_{n} (p(n)/n)=c$  and consider the empirical distribution 
	\begin{equation} \label{dens_spec_emp_m} \mu_{n}= \frac1p\sum_{j=1}^p \delta_{cn\lambda_j(\rho_n)}, \end{equation}
	where $\rho_n=X_nX_n^\dagger/\mathrm{Tr}(X_nX_n^\dagger)$ and $X_n=(x_{kl})_{\substack{1\leq k \leq p(n)\\ 1\leq l \leq n}}$. Then
	$$ \mathbb P(\mu_{n} \xrightarrow[]{} \nu_c\quad \mbox{in distribution})=1, $$
	where $\nu_c$ is the Marchenko-Pastur distribution with parameter $c$ (defined rigorously in the next section).\\	Furthermore we also have
	$$ \lim_n cn\lambda_1(\rho_n) = (\sqrt c +1)^2\quad \mbox{with probability one,} $$
	and
	$$ n^{2/3} \frac{cn\lambda_1(\rho_n)- (\sqrt c+1)^2 }{ (\sqrt c+1)(1/\sqrt c +1)^{1/3}} \xrightarrow[n\to\infty]{} F_2 \quad \mbox{in distribution}, $$
	where $F_2$ denotes the Tracy-Widom distribution with parameter 2. (For more details on the Tracy-Widom law see e.g. \cite{tracy2002distribution}.)
While working with density matrices it is also interesting to analyze the asymptotic behavior of the entropy. As an application of the main results we show in the Section 4 that the asymptotic entropy rate is logarithmic.

\vspace{3mm}

\noindent In the following we will state the main results.
\begin{theorem}\label{T}
	Assume that $\{x_{ij},i,j=1,2,\ldots\}$ is a collection of complex IID random variables, with $\M [x_{kl}]=0$, $\M [x_{kl}^2]=0$ and $\M [|x_{kl}|^2]=1$, and assume that $\lim _n p(n)/n=c$. Then
	$$ \mathbb P(\mu_{n}\xrightarrow[]{} \nu_c\quad \mbox{in distribution})=1 $$
	where $\mu_{n}$ denotes the same measure as in equation (\ref{dens_spec_emp_m}) and $\M$ denotes the expectation functional.
\end{theorem}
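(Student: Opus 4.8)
The plan is to reduce the assertion to the classical Marchenko--Pastur theorem by controlling the random scalar $\mathrm{Tr}(X_nX_n^\dagger)$. Put $T_n:=\mathrm{Tr}(X_nX_n^\dagger)=\sum_{k=1}^{p(n)}\sum_{l=1}^{n}|x_{kl}|^{2}$. Since $\rho_n=X_nX_n^\dagger/T_n$ we have $\lambda_j(\rho_n)=\lambda_j(X_nX_n^\dagger)/T_n$, hence
$$ cn\,\lambda_j(\rho_n)=\frac{cn^{2}}{T_n}\cdot\frac{\lambda_j(X_nX_n^\dagger)}{n},\qquad j=1,\dots,p. $$
Thus $\mu_n$ is the image of the empirical spectral distribution $\widetilde\mu_n:=\frac1p\sum_{j=1}^{p}\delta_{\lambda_j(X_nX_n^\dagger)/n}$ of $\tfrac1nX_nX_n^\dagger$ under the dilation $x\mapsto a_nx$ with $a_n:=cn^{2}/T_n$. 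I would then argue in three steps: (i) $a_n\to1$ a.s.; (ii) $\widetilde\mu_n\to\nu_c$ in distribution a.s.; (iii) dilating by a factor tending to $1$ preserves weak convergence.

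For (i): $T_n$ is a sum of $p(n)n$ i.i.d.\ nonnegative variables $|x_{kl}|^{2}$ of mean $1$, with $p(n)n\to\infty$, so the strong law of large numbers gives $T_n/(p(n)n)\to1$ a.s.\ (the usual trace concentration, needing only that the $x_{kl}$ are i.i.d.\ with $\M|x_{11}|^{2}<\infty$ and that $p(n)/n$ converges); combined with $p(n)/n\to c$ this gives $T_n/(cn^{2})=\tfrac{T_n}{p(n)n}\cdot\tfrac{p(n)}{cn}\to1$, i.e.\ $a_n=cn^{2}/T_n\to1$ a.s. For (ii): the $x_{ij}$ are i.i.d.\ with $\M x_{ij}=0$, $\M|x_{ij}|^{2}=1$ and $p(n)/n\to c$, so the almost-sure form of the Marchenko--Pastur theorem (valid already under a finite second moment; see the references in Section~\ref{intro}) furnishes an event of probability one on which $\widetilde\mu_n\to\nu_c$ in distribution. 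The extra hypothesis $\M x_{ij}^{2}=0$ plays no role here; it is needed only for the edge statements (largest-eigenvalue location and the Tracy--Widom limit).

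For (iii): I would intersect the two probability-one events and work on their intersection. Let $F$ and $\widetilde F_n$ denote the distribution functions of $\nu_c$ and $\widetilde\mu_n$. Since $F$ is continuous off $\{0\}$, P\'olya's lemma improves $\widetilde F_n\to F$ to uniform convergence on compact subsets of $\R\setminus\{0\}$, and together with $a_n\to1$ this yields $\mu_n((-\infty,x])=\widetilde F_n(x/a_n)\to F(x)$ at every continuity point $x\neq0$ of $F$; and when $c\le1$ the point $0$ is a continuity point as well, with $\mu_n(\{0\})=\widetilde\mu_n(\{0\})\to\nu_c(\{0\})=0$ by the portmanteau theorem (the set $\{0\}$ being closed). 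So $\mu_n\to\nu_c$ in distribution on an event of probability one, which is the claim. The only feature not covered by a direct appeal to the Wishart/Marchenko--Pastur asymptotics is the \emph{random} normalization $T_n$; I expect the main (and quite mild) obstacle to be the trace concentration in step (i) — the one place the i.i.d.\ assumption genuinely enters — and the elementary but slightly fussy bookkeeping in step (iii) replacing $a_n$ by $1$.
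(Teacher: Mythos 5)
Your proposal is correct, and it shares the paper's overall skeleton: apply the classical Marchenko--Pastur theorem to $\tfrac1nX_nX_n^\dagger$, use the SLLN to get $a_n:=cn^2/\mathrm{Tr}(X_nX_n^\dagger)\to 1$ a.s., and then show that a dilation by a factor $a_n\to 1$ preserves a.s.\ weak convergence. Where you diverge from the paper is in how the last step is carried out. The paper passes to the functional $S(\veps,\mu)=\int(x+\veps)^{-1}\,d\mu$ and invokes Lemma~\ref{St-conv} (a.s.\ weak convergence is equivalent to a.s.\ convergence of $S(\veps,\cdot)$ for every $\veps>0$); the identity $S(\veps,\mu_n)=a_n^{-1}\,S(\veps/a_n,\mu_n')$ together with continuity of $S$ in $\veps$ then closes the argument in one line, with no case analysis near the possible atom at $0$, and the lemma is reused elsewhere in the paper. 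You instead argue directly with distribution functions, establishing $\widetilde F_n(x/a_n)\to F(x)$ at continuity points $x$ of $F$; this is more elementary and self-contained but requires the brief bookkeeping around $0$ that you supply. Both routes work, and you are right that $\M x_{ij}^2=0$ plays no role at this stage. One small simplification: the appeal to P\'olya's lemma is more than you need --- monotonicity of $\widetilde F_n$ together with continuity of $F$ at $x$ already gives $\widetilde F_n(x_n)\to F(x)$ whenever $x_n\to x$ (sandwich $x_n$ between two nearby continuity points $y_1<x<y_2$), so uniform convergence on compacts is not required.
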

\begin{theorem}\label{T2}
	Assume $c\in (0,\infty)$ and $\lim_n p(n)/n=c$. Consider the sequence of random density matrices $\rho_n= X_n X_n^\dagger /\mathrm{Tr}(X_nX_n^\dagger)$ (where $X_n=(x_{kl})_{\substack{1\leq k \leq p(n) \\ 1 \leq l \leq n}}$ with $(x_{kl})$ as in the previous theorem), and suppose $\M [|x_{11}|^4]<\infty$. Then $$ \lim _n cn\lambda_1(\rho_n)=( \sqrt c+1)^2\quad \text{with probability one}, $$
	$$ \lim_n cn\lambda_{n}(\rho_n)=(\sqrt c-1)^2 \quad \mbox{ with probablity one,} $$
	$$  n^{2/3}\frac{cn \lambda_1(\rho_n)-\big(1+\sqrt {\frac{p(n)}{n}} \big)^2}{\big(1+\sqrt{\frac{p(n)}{n}}\big)\big(1+1/\sqrt {\frac {p(n)}n}\big)^{1/3}} \xrightarrow[]{} F_2 \quad \mbox{in distribution.} $$
	Furthermore 
	$$ n^{2/3}\frac{cn \lambda_n(\rho_n)-\big(1-\sqrt {\frac{p(n)}{n}} \big)^2}{\big(\sqrt{\frac{p(n)}{n}}-1\big)\big(1/\sqrt {\frac {p(n)}{n}}-1\big)^{1/3}} \xrightarrow[]{} F_2 \quad \mbox{in distribution,} $$
	if $\exists\ \gamma_1>\gamma_2>0$ constants such that $\gamma_1<p(n)/n<\gamma_2$ and the $ (x_{kl}) $'s have subexponential decay, i.e. $\exists \tau_0,T>0$ independent of $k,l,n$, such that 
	$$ \mathbb P(|x_{kl}|\geq t) \leq \tau_0^{-1} \exp (-t^ {\tau_0}) \quad \text{for } t\geq T ,$$
	and $F_2$ denotes the Tracy-Widom law of parameter 2.
\end{theorem}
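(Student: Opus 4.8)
\emph{Proof strategy.} The plan is to transfer the whole statement from the classical edge asymptotics of the Wishart-type matrix $W_n:=X_nX_n^\dagger$, exploiting that the spectrum of $\rho_n$ is that of $W_n$ rescaled by the scalar $S_n:=\mathrm{Tr}(W_n)=\sum_{k=1}^{p(n)}\sum_{l=1}^{n}|x_{kl}|^2$. Indeed $\lambda_j(\rho_n)=\lambda_j(W_n)/S_n$, so with $\theta_n:=cn^2/S_n$ one has
$$ cn\,\lambda_j(\rho_n)=\theta_n\cdot\frac{\lambda_j(W_n)}{n}\qquad\text{for every }j . $$
Thus everything reduces to (i) controlling the scalar $\theta_n$, (ii) quoting the known statements for $\lambda_1(W_n)/n$ and for the smallest non-zero eigenvalue $\lambda_{\min}(W_n)/n$, and (iii) combining the two by Slutsky's theorem.

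First I would dispose of the normalization. Since $S_n$ is a sum of $p(n)n$ i.i.d.\ nonnegative terms with mean $\M[|x_{11}|^2]=1$, the strong law of large numbers gives $S_n/(p(n)n)\to1$ almost surely, whence $\theta_n=\frac{cn}{p(n)}\cdot\frac{p(n)n}{S_n}\to1$ almost surely. For the fluctuation part the quantitative refinement $n^{2/3}(\theta_n-1)\xrightarrow{P}0$ is needed: since $\M[|x_{11}|^4]<\infty$ the variable $|x_{11}|^2$ has finite variance, so $S_n-p(n)n=O_P(\sqrt{p(n)n})=O_P(n)$ (using $p(n)\asymp n$), and writing $cn^2-S_n=n\bigl(cn-p(n)\bigr)+\bigl(p(n)n-S_n\bigr)$ gives
$$ n^{2/3}(\theta_n-1)=\frac{n^{2/3}\bigl(cn^2-S_n\bigr)}{S_n}=O_P\bigl(n^{-1/3}\bigr)+O\bigl(n^{-1/3}(cn-p(n))\bigr)\xrightarrow{P}0 , $$
the last term being harmless for the regular choices of $p(n)$ one has in mind (e.g.\ $p(n)=\lceil cn\rceil$), or, if one prefers, after reading $c$ as $p(n)/n$ in the centering.

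For the two almost sure limits I would invoke the edge convergence recalled in the Introduction (the Bai--Yin theorem): under $\M[|x_{11}|^4]<\infty$ and $p(n)/n\to c$ one has $\lambda_1(W_n)/n\to(1+\sqrt c)^2$ and $\lambda_{\min}(W_n)/n\to(1-\sqrt c)^2$ almost surely, where $\lambda_{\min}$ denotes the smallest non-zero eigenvalue (the $n$-th largest when $p(n)\ge n$); multiplying by $\theta_n\to1$ gives the first two displays. For the Tracy--Widom limits I would use the edge universality theorem recalled in the Introduction for sample covariance matrices whose entries satisfy the stated moment conditions (in particular $\M x_{11}^2=0$) and have subexponential tails, with $p(n)/n$ bounded and bounded away from $0$ (and away from $1$ for the lower edge): namely
$$ n^{2/3}\,\frac{\lambda_1(W_n)/n-\bigl(1+\sqrt{p(n)/n}\bigr)^2}{\bigl(1+\sqrt{p(n)/n}\bigr)\bigl(1+\sqrt{n/p(n)}\bigr)^{1/3}}\ \longrightarrow\ F_2 , $$
and the analogous statement at the lower edge with $(1-\sqrt{p(n)/n})^2$ and $(\sqrt{p(n)/n}-1)(\sqrt{n/p(n)}-1)^{1/3}$. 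From the identity
$$ cn\,\lambda_1(\rho_n)-\bigl(1+\sqrt{\tfrac{p(n)}{n}}\bigr)^2=\theta_n\Bigl(\tfrac{\lambda_1(W_n)}{n}-\bigl(1+\sqrt{\tfrac{p(n)}{n}}\bigr)^2\Bigr)+(\theta_n-1)\bigl(1+\sqrt{\tfrac{p(n)}{n}}\bigr)^2 , $$
dividing by $\bigl(1+\sqrt{p(n)/n}\bigr)\bigl(1+\sqrt{n/p(n)}\bigr)^{1/3}$ — which is bounded above and away from zero since $p(n)/n$ is — and multiplying by $n^{2/3}$: the first term converges to $F_2$ by the quoted result together with $\theta_n\to1$ (Slutsky), while the second equals $O_P\bigl(n^{2/3}(\theta_n-1)\bigr)\to0$ by the previous step. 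This yields the third display, and the fourth follows in the same way from the lower-edge version.

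The proof is essentially assembly once these black boxes are in place; the one genuinely delicate point is the assertion that the scalar normalization $\mathrm{Tr}(W_n)$ is concentrated tightly enough — its relative fluctuation must be $o(n^{-2/3})$ — so that the Tracy--Widom scaling of $\rho_n$ is inherited verbatim from that of $W_n$. This is exactly where the finite fourth moment enters beyond Bai--Yin, and where one must be careful about the mismatch between $cn$ and $p(n)$.
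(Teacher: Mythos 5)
Your proof follows essentially the same route as the paper's: factor out the trace scalar, quote known almost-sure (Geman/Bai--Yin) and Tracy--Widom edge-universality results for $W_n=X_nX_n^\dagger$, and reduce to the concentration estimate $n^{2/3}(\theta_n-1)\to 0$ in probability. The only technical difference is how that concentration step is established: you use a variance bound $S_n - p(n)n = O_P(\sqrt{p(n)n}) = O_P(n)$ coming from $\M[|x_{11}|^4]<\infty$, whereas the paper decomposes over events $A_{p,n}=\{|S_{p,n}-np|\le\beta\}$ and $B_{\veps,p,n}$ and appeals to the SLLN plus Fatou; your version is shorter and gives the same $O_P(n^{-1/3})$ rate.

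You are right to flag the $cn$ versus $p(n)$ mismatch, and in fact the paper handles it less carefully than you do. After reducing to $n^{2/3}\bigl(1-cn^2/S_n\bigr)\to 0$, the paper replaces $cn^2$ by $np(n)$ ``since $\gamma_1<p(n)/n<\gamma_2$'' and then proves $n^{2/3}\bigl(1-np(n)/S_n\bigr)\to 0$. But the discrepancy between the two quantities is $n^{2/3}\cdot n\bigl(p(n)-cn\bigr)/S_n \asymp n^{-1/3}\bigl(p(n)-cn\bigr)$, which does not vanish unless $|p(n)/n-c|=o(n^{-2/3})$ --- a condition assumed nowhere, and one that sits awkwardly next to the paper's own remark that the convergence $p(n)/n\to c$ can be arbitrarily slow. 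Your observation, and your suggested repair (replace the prefactor $cn$ by $p(n)$, i.e.\ read $c$ as $p(n)/n$ in the scaling of $\lambda_j(\rho_n)$), is exactly right: with $p(n)\lambda_j(\rho_n)$ in place of $cn\lambda_j(\rho_n)$ the offending term disappears and the Tracy--Widom conclusions hold under precisely the stated hypotheses. Apart from this wrinkle --- which you correctly identify but, like the paper, do not fully resolve for the theorem as literally written --- the argument is correct.
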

Note that when determining the asymptotic distribution of $\lambda_1(\rho_n)$ the quantity $\sqrt{p/n}$ cannot be replaced by $\sqrt c$ as the convergence of $p/n$ can be arbitrarily slow.
Figures (\ref{fig:TW2}) and (\ref{fig:MP}) show numerical evidence for Theorems \ref{T} and \ref{T2}. Both simulations were done using matrices with IID elements uniformly distributed on the set $\{\frac{\pm1\pm i}{\sqrt 2}\}$. The other parameters were chosen as $n=2000$, $c=1/2$ and the sample size was $5000$. The density function of the Tracy-Widom law of parameter 2 was computed with the routine {\verb dtw(x,beta=2) } of the R package called ``RMTstat", while the eigenvalue statistics were computed in Julia.
	\begin{figure}[!h]
		\centering
		\begin{subfigure}{0.48\linewidth}
			\centering
			\includegraphics[width=8.5cm,height=6.5cm]{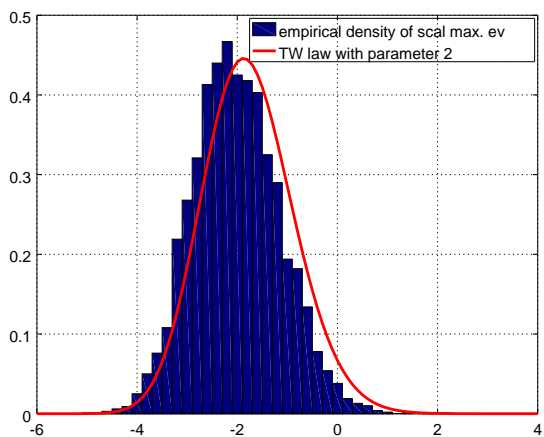}
			\caption{Tracy-Widom law and the \\ empirical density of the scaled $\lambda_1$}	\label{fig:TW2}
		\end{subfigure}  \hspace*{\fill}
		\begin{subfigure}{0.48\linewidth}
			\centering
			\includegraphics[width=8.5cm,height=6.5cm]{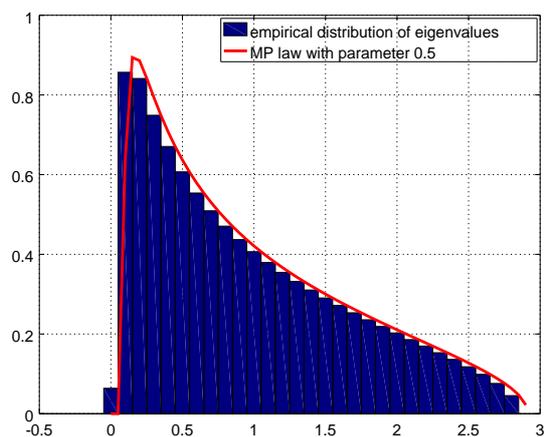}
			\caption{Marchenko-Pastur law and the \\ empirical density of the eigenvalues}
			\label{fig:MP}
		\end{subfigure}
		\caption{Numerical simulations}
		\label{fig:Num_sim}
	\end{figure}

\section{Proof of Theorem \ref{T} and Theorem \ref{T2}}
\vspace{3mm}

Before the proof let us evoke the well known theorem of Marchenko and Pastur.
\begin{theorem}[Marchenko-Pastur \cite{mar67}]
		Suppose $\{x_{kl},1\leq k,l\}$ is a family of IID complex random variables such that $\mathbb E[x_{kl}]=0$, $\mathbb E |x_{kl}|^2=1$, furthermore suppose $p=p(n)$ and $\lim_n p(n)/n=c>0$. Let $X_n:=(x_{kl})_{\substack{1\leq k \leq p(n) \\ 1\leq l \leq n}}$ and $W_n:= \frac1n X_nX_n^\dagger$ and $\mu_n':=\frac1{p(n)} \sum_{j=1}^n \delta_{\lambda_j (W_n)}$. Then we have
	\begin{equation} \label{MP_conv}\mathbb P\bigg(\mu_n' \xrightarrow[]{} \nu_c\quad \mbox{in distribution}\bigg)=1, \end{equation}
	where $\nu_c$ denotes the Marchenko-Pastur distribution with parameter $c$.
\end{theorem}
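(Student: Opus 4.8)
\emph{Proof plan.} The plan is to prove the statement by the classical method of moments, in three steps: (i) a truncation reducing to entries with all moments finite; (ii) a combinatorial computation showing that the expected moments of the empirical spectral distribution (ESD) of $W_n$ converge to the moments of $\nu_c$; and (iii) a concentration estimate together with Carleman's condition to upgrade (ii) to almost sure weak convergence. It is convenient to work with the genuine probability measure $\widehat\mu_n:=\tfrac1{p(n)}\sum_{j=1}^{p(n)}\delta_{\lambda_j(W_n)}$, the ESD of the $p\times p$ matrix $W_n$; when $p\neq n$ the $|p-n|$ deterministic zero eigenvalues only shift mass to or from the point $0$ and are recorded by the atom $(1-1/c)\,\mathbb I_{[1,\infty)}(c)\,\delta_0$ of $\nu_c$, so nothing is lost.

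\emph{Truncation.} Since $\M[|x_{11}|^4]$ is not assumed finite, even $\M[\mathrm{Tr}(W_n^2)]$ may be infinite; one therefore replaces $x_{kl}$ by $\widehat x_{kl}:=x_{kl}\,\mathbb I(|x_{kl}|\le\eta_n\sqrt n)$ with $\eta_n\downarrow0$ slowly, recentres and rescales to restore mean $0$ and second moment $1$, and forms the corresponding normalized matrix $\widehat W_n$. The standard perturbation estimates for sample-covariance ESDs — the rank inequality $\|F^A-F^B\|_\infty\le p^{-1}\mathrm{rank}(A-B)$ for the rank-one recentring, and a Frobenius-type bound controlling the L\'evy distance of $F^{W_n}$ and $F^{\widehat W_n}$ by (up to a harmless factor) $\big(\tfrac1{pn}\sum_{k\le p,\,l\le n}|x_{kl}-\widehat x_{kl}|^2\big)^{1/2}$ — together with $\M[|x_{11}|^2\mathbb I(|x_{11}|>\eta_n\sqrt n)]\to0$, show that $F^{W_n}$ and $F^{\widehat W_n}$ share the same weak limit almost surely. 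Hence it suffices to treat $\widehat W_n$, whose entries, being $O(\eta_n\sqrt n)$-bounded, have finite moments of every order; this is the only place where the bare hypothesis $\M[|x_{11}|^2]=1$, with nothing stronger, is used.

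\emph{Moments.} Expanding the power of the trace,
$$\M\Big[\tfrac1p\,\mathrm{Tr}\,\widehat W_n^{\,k}\Big]=\frac1{p\,n^{k}}\sum_{i_0,\dots,i_{k-1}=1}^{p}\ \sum_{j_0,\dots,j_{k-1}=1}^{n}\M\big[\widehat x_{i_0j_0}\overline{\widehat x_{i_1j_0}}\,\widehat x_{i_1j_1}\overline{\widehat x_{i_2j_1}}\cdots\widehat x_{i_{k-1}j_{k-1}}\overline{\widehat x_{i_0j_{k-1}}}\big],$$
each summand being a closed walk of length $2k$ alternating between $p$ row-vertices and $n$ column-vertices, carrying an ``$x$'' on each row$\to$column step and an ``$\bar x$'' on each column$\to$row step. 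Since $\M[\widehat x]=0$, $\M[\widehat x^{\,2}]=o(1)$ and $\M[|\widehat x|^2]\to1$, any walk containing an edge traversed only once has expectation $0$; if every edge is traversed at least twice there are at most $k$ distinct edges, so the walk's edge-support is a connected bipartite graph on at most $k+1$ vertices, with equality precisely for walks whose support is a tree and which traverse each edge exactly twice — automatically once as $x$ and once as $\bar x$, by the alternation — and hence contribute $\prod_{\text{edges}}\M[|\widehat x|^2]\to1$. Balancing the number of free row/column indices against the normalization $p^{-1}n^{-k}$ shows that only these tree-walks survive; a tree-walk with $a$ distinct row-vertices (hence $k+1-a$ column-vertices) contributes the factor $c^{a-1}$ in the limit, and such walks are enumerated by the Narayana numbers $N(k,a)=\tfrac1k\binom ka\binom k{a-1}$, so
$$\lim_n\M\Big[\tfrac1p\,\mathrm{Tr}\,\widehat W_n^{\,k}\Big]=\sum_{a=1}^{k}N(k,a)\,c^{a-1}=:m_k(c),$$
which a standard computation identifies with $\int x^k\,d\nu_c(x)$; together with $m_0(c)=1$ these are exactly the moments of the Marchenko–Pastur law $\nu_c$.

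\emph{Almost sure convergence and the main obstacle.} Running the same graph count on a fourfold closed walk, using the boundedness of the truncated entries, yields $\M\big|\tfrac1p\,\mathrm{Tr}\,\widehat W_n^{\,k}-\M[\tfrac1p\,\mathrm{Tr}\,\widehat W_n^{\,k}]\big|^4=O(n^{-2})$ (with $\eta_n$ decaying slowly enough that this persists), so Chebyshev and Borel–Cantelli give $\tfrac1p\,\mathrm{Tr}\,\widehat W_n^{\,k}\to m_k(c)$ almost surely for each fixed $k$, hence simultaneously for all $k\in\N$ on one event of probability one. Since $\nu_c$ is supported in $[(\sqrt c-1)^2,(\sqrt c+1)^2]$ (together with $\{0\}$ when $c\ge1$), its moment sequence obeys Carleman's condition, so $\nu_c$ is the unique probability measure with moments $(m_k(c))_{k\ge0}$; convergence of all moments to those of such a measure is equivalent to weak convergence, and undoing the truncation carries this back to $\widehat\mu_n$, i.e. to $\mu_n'$. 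The step I expect to be the real obstacle is the moment computation and its fourfold companion: one must argue carefully that only the tree-type, doubly-traversed walks contribute in the limit, keep precise track of the exponents of $p$ and $n$, and recognise the surviving count as the Narayana polynomial $m_k(c)$; the truncation, Borel–Cantelli and Carleman steps are routine.
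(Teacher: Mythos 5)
This theorem is not proved in the paper at all: it is quoted as a known classical result, with the proof deferred to the cited references (Bai--Silverstein and the original Marchenko--Pastur paper), so there is no in-paper argument to compare against. Your outline is the standard moment-method proof of the Marchenko--Pastur law as found in those references (truncation and recentring with the rank and L\'evy-distance perturbation bounds, the bipartite closed-walk expansion in which only doubly-traversed tree walks survive and are counted by Narayana numbers, a fourth-moment bound of order $n^{-2}$ plus Borel--Cantelli for almost sure convergence, and Carleman's condition since $\nu_c$ is compactly supported), and it is essentially correct; note that the original Marchenko--Pastur argument, and the alternative proof in Bai--Silverstein, instead go through the Stieltjes transform and a self-consistent equation, which is also the tool this paper uses elsewhere (Lemma \ref{St-conv}). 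Two small remarks: your assertion $\M[\widehat x^{\,2}]=o(1)$ is neither implied by the hypotheses (only $\M[x_{11}]=0$ and $\M[|x_{11}|^2]=1$ are assumed) nor needed, since, as you yourself observe, the alternation forces every edge of a surviving tree walk to be traversed once as $x$ and once as $\bar x$, so only $\M[|\widehat x|^2]$ ever enters the dominant terms; and your normalization $\widehat\mu_n=\frac1p\sum_{j=1}^{p}\delta_{\lambda_j(W_n)}$ is indeed the correct reading of the paper's $\mu_n'$, whose upper summation limit $n$ is a typo.
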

\noindent Note that this is a more general and more concisely phrased version of the original theorem. A proof of this can be found in \cite{bai2010spectral,mar67}. \\
Given a measure $\mu$ supported on $\R^+$, introduce the notation $$S(\veps,\mu):=\int\frac{1}{x+\veps} d\mu(x)\quad \veps>0.$$ 
We will also need the following lemma (for a proof see the Appendix).
\begin{lemma}[\cite{Yaskov}]\label{St-conv}
	Let $\mu,\{\mu_n,n\in \mathbb N \}$ be random probability measures with support in $[0,\infty)$. Then $\mu_n$ converges weakly to $\mu$ with probability one if and only if $S(\veps,\mu_n)$ converges to $ S(\veps,\mu)$ with probability one $\forall \veps>0$.
\end{lemma}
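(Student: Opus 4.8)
\noindent The plan is to prove the two implications separately, carrying the harder one out on a single event of probability one. One direction is immediate: on the event $\{\mu_n\to\mu\text{ weakly}\}$, which has probability one by hypothesis, the function $x\mapsto(x+\veps)^{-1}$ is bounded and continuous on $[0,\infty)$ for each fixed $\veps>0$, so $S(\veps,\mu_n)\to S(\veps,\mu)$ there, and — the underlying event being fixed — this holds for all $\veps>0$ at once. For the converse I would set $\Omega_0:=\{\,S(q,\mu_n)\to S(q,\mu)\text{ for every rational }q>0\,\}$, a countable intersection of probability-one events, so $\mathbb P(\Omega_0)=1$; since $\veps\mapsto S(\veps,\nu)$ is equicontinuous in $\nu$ (its $\veps$-derivative is bounded by $\veps^{-2}$ uniformly in $\nu$), on $\Omega_0$ one in fact gets $S(\veps,\mu_n)\to S(\veps,\mu)$ for all real $\veps>0$.

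The first and, I expect, only genuinely delicate step is tightness of $(\mu_n)$ on $\Omega_0$. From the identity $1-\veps S(\veps,\mu_n)=\int\frac{x}{x+\veps}\,d\mu_n$ and the pointwise bound $\frac{x}{x+\veps}\ge\frac{M}{M+\veps}$ for $x\ge M$, I would derive $\mu_n([M,\infty))\le\frac{M+\veps}{M}\big(1-\veps S(\veps,\mu_n)\big)$ for every $M,\veps>0$. Taking $\veps=M$ and letting $n\to\infty$ on $\Omega_0$ gives $\limsup_n\mu_n([M,\infty))\le 2\int\frac{x}{x+M}\,d\mu$, whose right-hand side tends to $0$ as $M\to\infty$ by dominated convergence. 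This rules out escape of mass to infinity, and the scaling choice $\veps=M$ in the tail bound is exactly what makes the estimate usable; this is the part that needs a real idea.

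Next I would record that $\{S(\veps,\cdot):\veps>0\}$ separates probability measures on $[0,\infty)$. Writing $(x+\veps)^{-1}=\int_0^\infty e^{-t(x+\veps)}\,dt$ and applying Tonelli gives $S(\veps,\mu)=\int_0^\infty e^{-\veps t}\,\widehat\mu(t)\,dt$ with $\widehat\mu(t):=\int e^{-tx}\,d\mu(x)$ bounded and continuous on $[0,\infty)$; injectivity of the Laplace transform, used once to recover the function $\widehat\mu$ and once more to recover the measure $\mu$, shows that $\big(S(\veps,\mu)\big)_{\veps>0}$ determines $\mu$. (Equivalently, $S(\veps,\mu)$ is the value at $z=-\veps$ of the Stieltjes transform of $\mu$, which is analytic on $\mathbb C\setminus[0,\infty)$ and hence pinned down on its whole domain, in particular on $\mathbb C^+$ and thus as a measure, by its restriction to $(-\infty,0)$.)

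Finally I would close with the standard subsequence argument on $\Omega_0$: by tightness and Prokhorov's theorem every subsequence of $(\mu_n)$ has a further subsequence converging weakly to some probability measure $\widetilde\mu$ on $[0,\infty)$ (tightness forbids loss of mass); the easy direction along this sub-subsequence gives $S(\veps,\widetilde\mu)=\lim_k S(\veps,\mu_{n_k})=S(\veps,\mu)$ for all $\veps>0$, whence $\widetilde\mu=\mu$ by the separation property. Since every subsequence thus has a further subsequence weakly convergent to the same limit $\mu$, the whole sequence converges weakly to $\mu$ on $\Omega_0$, and $\mathbb P(\Omega_0)=1$. I expect the tail/tightness estimate to be the only real obstacle; the remaining ingredients — bounded continuous test functions, uniqueness of the Laplace/Stieltjes transform, and the subsequence principle — are routine.
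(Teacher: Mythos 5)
Your proof is correct, and it takes a genuinely different route from the paper's. After the shared opening moves (the easy direction via bounded continuous test functions, and the reduction to a single probability-one event $\Omega_0$ by working with rational $\veps$ and using the uniform bound $\bigl|\partial_\veps S(\veps,\nu)\bigr|\le\veps^{-2}$), the two arguments diverge. The paper passes to the tilted measures $\overline\mu_n$ with density $(1+x)^{-1}$, invokes a Bernstein-function/completely-monotone convergence theorem from \cite{schilling2012bernstein} to get vague convergence of $\overline\mu_n$ on the compactification $[0,\infty]$, then untilts via $f_z(x)=(x+1)/(x-z)$ to recover Stieltjes-transform convergence and finishes with the Stieltjes continuity theorem. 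Working on $[0,\infty]$ lets the paper avoid a tightness argument altogether. You instead prove tightness directly — the estimate $\mu_n([M,\infty))\le\tfrac{M+\veps}{M}\bigl(1-\veps S(\veps,\mu_n)\bigr)$ followed by the scaling $\veps=M$ is exactly the right idea and is correct — then use Prokhorov plus the separating property of $\{S(\veps,\cdot)\}_{\veps>0}$ (via Laplace-transform injectivity, or equivalently analytic continuation of the Stieltjes transform) and the standard sub-subsequence principle. Your argument is more self-contained, trading the external Bernstein-function input for an explicit tail bound; the paper's is shorter once the cited machinery is granted. Both are sound, and both ultimately rest on the same fact that the Stieltjes transform determines the measure.
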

\noindent\begin{proof}[Proof of Theorem \ref{T}]
	According to Lemma \ref{St-conv}, equation (\ref{MP_conv}) is equivalent to \\ $\mathbb P\bigg(S(\veps,\mu_{n})\rightarrow S(\veps,\nu_c)\bigg)=1$ for all $\veps>0$, hence it is sufficient to prove that
	\begin{equation}\label{rho-St-lim} \mathbb P \big(S(\veps,\mu_{n})- S(\veps,\mu_n')\xrightarrow[]{} 0\big) =1 \quad\quad \forall \veps>0. \end{equation}
	It can be easily checked that
	\begin{equation*}\label{rho-St}S(\veps,\mu_{n})= \frac{\mathrm{Tr}\ X_nX_n^\dagger}{np(n)}\cdot S\left(\frac{\mathrm{Tr}( X_nX_n^\dagger)}{np(n)} \veps, \mu_{n,}' \right). \end{equation*}
	By assumption the elements of $X$ are IID, thus $\mathrm{Tr}\ X_nX_n^\dagger/(np(n))\xrightarrow[]{}1$ with probability one, since the Strong Law of Large Numbers (SLLN) is applicable and $\lim_n p(n)/n=c$. Also, because of $\mu$ being a finite measure, $S(\veps,\mu)$ is a continuous function of $\veps$, which implies (\ref{rho-St-lim}). 
\end{proof}

 \begin{proof}[Proof of Theorem \ref{T2}]
 	
	The first part of the theorem is quite obvious. 
	Geman showed in \cite{geman} that under the assumption of the present theorem we have
	\begin{align*} \frac{\lambda_1(X_nX_n^\dagger)}{n} &\xrightarrow[]{} (1+\sqrt c)^2 \quad \mbox{with probability one,} \\
\frac{\lambda_n(X_nX_n^\dagger)}{n}& \xrightarrow[]{} (1-\sqrt c)^2 \quad \mbox{with probability one,} \end{align*}
and	hence 
	\begin{align*} \label{r1} cn\lambda_1(\rho_n)= \frac{c n^2 }{ \mathrm{Tr}( X_nX_n^\dagger)} \frac{ \lambda_1(X_nX_n^\dagger)}{n} & \xrightarrow[]{} (1+\sqrt c)^2 \quad \mbox{with probability one, and} \\
cn\lambda_n(\rho_n)= \frac{cn^2}{\mathrm{Tr}(X_nX_n^\dagger)} \frac{\lambda_n(X_nX_n^\dagger)}{n}&\to (1-\sqrt c)^2 \quad \mbox{with probability one} \end{align*}
	according to the SLLN. \\ 
	To justify the second part we have to compare the largest eigenvalue of $X_nX^\dagger_n$ with that of $\rho_n$.  
	According to the results of Bao et al. in \cite{bao} we have
	\begin{equation}  n^ {2/3}\frac{\lambda_1(X_nX_n^\dagger)/n- \bigg(1+\sqrt \frac {p(n)} n\bigg)^2}{\bigg(1+\sqrt \frac {p(n)}n\bigg)\bigg(1+ 1/\sqrt \frac {p(n)}n \bigg)^{1/3}} \xrightarrow[]{} F_2 \quad \mbox{in distribution,} \end{equation}
	which means that it is sufficient to show that $n^{2/3}(\lambda_1(X_nX_n^\dagger)/n-cn\lambda_1( \rho_n)) \xrightarrow[]{}0$ in distribution. Since
	\begin{align*}
	\frac{\lambda_1(X_nX_n^\dagger)}n-cn\lambda_1(\rho_n) &= \frac{\lambda_1(X_nX_n^\dagger)}n\left(1- \frac{cn^2}{\mathrm{Tr}( X_nX_n^\dagger)}\right), 
	\end{align*}  
	and $\lambda_1(X_nX_n^\dagger)/n \xrightarrow[]{}(1+\sqrt c)^2$ with probability one, it remains to prove that
	\begin{align} \label{lambda_1}  n^{2/3} \left(1-  \frac{cn^2}{\mathrm{Tr}( X_nX_n^\dagger)} \right)\xrightarrow[n\rightarrow \infty]{} 0 \quad \mbox{in probability}. \end{align}
	For arbitrary, but fixed $p$ and $n$ let $S_{p,n}:=\sum_{\substack{1\leq k \leq p\\ 1 \leq l \leq n}} |x_{ij}|^2$, then $\M [S_{p,n}]=np$, $\mathrm{Var}(S_{p,n})=np(\M[ |x_{11}|^4]-1) $ and $ \frac{S_{p,n}}{np}\xrightarrow[]{}1$ with probability 1 if $p=p(n)$ and $n\to \infty$. 
	Since $\gamma_1<k(n)/n<\gamma_2$ it is enough to show that
	$$ n^{2/3}\bigg(1-\frac{np(n)}{S_{p(n),n}}\bigg) \xrightarrow[n\to\infty]{} 0 \quad \mbox{in probability.} $$
	Furthermore let $A_{p,n}:=\{S_{p,n}\in [np-\beta,np+\beta]\}$ for some small $\beta>0$ and for any fixed $\veps>0$, and let $B_{\veps,p,n}:=\{n^{2/3}\big|1-\frac{np}{S_{p,n}}\big|<\veps\}$, then
	\begin{align}
	\mathbb P(B_{\veps,p,n} )\leq  \mathbb P(B_{\veps,p,n} \cap A_{p,n})+\mathbb P(\Omega \setminus A_{p,n}),	\end{align}
	since $B_{\veps, p,n}= (B_{\veps,p,n}\cap A_{p,n}) \cup (B_{\veps,p,n}\cap(\Omega\setminus A_{p,n}))$ and $B_{\veps,p,n} \cap (\Omega\setminus A_{p,n}) \subset (\Omega\setminus A_{p,n})$. 
	First, note that 
	\begin{align*} \mathbb P\big( \Omega\setminus A_{p,n} \big)&= \mathbb P \big(\big|S_{p,n}-np\big| >\beta \big)=\mathbb P\bigg( \big|\frac{S_{p,n}}{np}-1\big|>\frac{\beta}{np}\bigg)\leq \mathbb P \bigg(\big|\frac{S_{p,n}}{np}-1\big|>0\bigg).  \end{align*}
	Fix $k$ and let $n\to \infty$, then
	$$ \limsup_n \mathbb P\big(\Omega\setminus A_{p,n}\big)\leq \limsup_n \mathbb P\bigg(\big|\frac{S_{p,n}}{np}-1\big|>0\bigg)\leq \mathbb P\bigg(\limsup_n \big|\frac{S_{p,n}}{np}-1\big|>0\bigg)=0, $$
	where in the second inequality we used Fatou's lemma and the last equality is due to the SLLN.
	Switching the role of $p$ and $n$ we obtain the same for fixed $n$ and $p\to \infty$. This proves that $\mathbb P(\Omega\setminus A_{p(n),n})\to0 $ as $n\to\infty$. \\
	On the event $A_{p,n}$ we have
	$$ n^{2/3}\frac{|S_{p,n}-np|}{S_{p,n}}\leq n^{2/3} \frac{\beta}{np-\beta}=O(n^{1/3} p^{-1}), $$
	and the quantity on the right hand side tends to $0$ whenever $p=p(n)$ and $n\to\infty$. This proves $\mathbb P(B_{\veps,p(n),n}\cap A_{p(n),n})\to 0$ as $n\to \infty$ for any fixed $\veps>0$, therefore (\ref{lambda_1}) holds true. Since convergence in probability implies convergence in distribution the proof is completed.\\
	In the case of the smallest eigenvalue Feldheim and Sodin showed in \cite{feldheim2010universality} that 
	$$n^ {2/3}\frac{\lambda_1(X_nX_n^\dagger)/n- \bigg(\sqrt {\frac {p(n)} n}-1\bigg)^2}{\bigg(\sqrt{ \frac {p(n)}n}-1\bigg)\bigg(1/\sqrt \frac {p(n)}n -1 \bigg)^{1/3}} \xrightarrow[]{} F_2 \quad \mbox{in distribution.}$$
	The proof is essentially the same as for the previous case, thus it is left to the reader.

\end{proof}

\section{Application: Asymptotic entropy}
\vspace{3mm}
In this section we are going to investigate the von Neumann entropy of random density matrices of the previously discussed type. We will prove that it exhibits a Strong Law of Large Numbers type of behavior for large systems. As it is meant to characterize the chaos present in a system, the results of this section show how much disorder is to be expected in the observed system $\mathcal H$ after tracing out the environment $\mathcal K$. We will also see that, not surprisingly, the asymptotic entropy depends on the ratio of the size of the observation space $\mathcal H$ and the environment $\mathcal K$. First let us define the von Neumann entropy of a density matrix.
\begin{definition}
	Let $\rho$ denote a density matrix on a finite dimensional Hilbert space $\mathcal H$. Denote by
	$$ H(\rho)= -\mathrm{Tr} (\rho\log \rho)= -\sum_{j} \lambda_j(\rho)\log \lambda_j(\rho) $$
	the so-called von Neumann (also known as Shannon) entropy of $\rho$. In case $0$ is an eigenvalue define $0\log 0:=0$.
\end{definition}
Sommers and \. Zyczkowski computed asymptotic results for the mean von Neumann entropy in \cite{sommers2004statistical} by showing that 
$$ \mathbb E H(\rho_n)= \ln n -\frac12 + O\bigg( \frac {\ln n} n\bigg) , $$
if $\rho_n= XX^\dagger / \mathrm{Tr}(XX^\dagger)$ is such that $X$ is an $n\times n$ Gaussian random matrix with IID elements. Our next proposition generalizes their result.
\begin{proposition}
	Let $(\rho_n)$ be a sequence of random density matrices of type introduced in Theorem \ref{T2}. Then the following relations hold with probability one:
	
	$$ \lim_n \big(H(\rho_n)-\log n\big)=\log c-\frac1{2\pi c}\int_{x_-}^{x_+} \log x\cdot \sqrt{(x_+-x)(x-x_-)}\ dx, $$
	where $x_\pm=(1\pm\sqrt c)^2$, and as a consequence
	$$ \lim_n \frac{ H(\rho_n)}{\log n}=1. $$

\end{proposition}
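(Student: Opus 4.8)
The plan is to rewrite the von Neumann entropy as an (essentially bounded) linear statistic of the empirical spectral measure $\mu_n$ of (\ref{dens_spec_emp_m}) and then invoke Theorems \ref{T} and \ref{T2}. Put $\mu_j:=cn\,\lambda_j(\rho_n)$ for $j=1,\dots,p(n)$, so that $\mu_n=\frac1{p}\sum_{j=1}^{p}\delta_{\mu_j}$; since $\mathrm{Tr}\,\rho_n=1$ one has $\sum_{j=1}^{p}\mu_j=cn$, and with the convention $0\log 0=0$ a short computation gives
\begin{equation*}
H(\rho_n)=\log(cn)-\frac1{cn}\sum_{j=1}^{p}\mu_j\log\mu_j=\log(cn)-\frac{p}{cn}\int x\log x\,d\mu_n(x),
\end{equation*}
so that $H(\rho_n)-\log n=\log c-\frac{p}{cn}\int x\log x\,d\mu_n(x)$. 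Since $p(n)/(cn)\to1$ deterministically, the statement reduces to proving that $\int x\log x\,d\mu_n(x)\to\int x\log x\,d\nu_c(x)$ with probability one and then evaluating the limiting integral; the second assertion is then immediate, since it yields $H(\rho_n)=\log n+O(1)$ almost surely.

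The only real difficulty is that $f(x):=x\log x$, extended continuously by $f(0):=0$, is \emph{unbounded} on $[0,\infty)$, so the weak convergence $\mu_n\Rightarrow\nu_c$ coming from Theorem \ref{T} does not on its own let one pass to the limit in $\int f\,d\mu_n$. I would resolve this using the almost-sure edge convergence of Theorem \ref{T2}: under the standing assumption $\M|x_{11}|^4<\infty$ one has $cn\,\lambda_1(\rho_n)\to(1+\sqrt c)^2=:x_+$ with probability one (only this part of Theorem \ref{T2} is needed, not the subexponential-decay hypothesis), hence almost surely $\mathrm{supp}\,\mu_n\subseteq[0,x_++1]$ for all large $n$. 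Fixing a continuous, compactly supported $g:\R\to[0,1]$ with $g\equiv1$ on $[0,x_++1]$, the product $fg$ is bounded and continuous on $\R$, so Theorem \ref{T} gives $\int fg\,d\mu_n\to\int fg\,d\nu_c$ almost surely; since eventually $\mu_n$ is carried by $\{g=1\}$ and $\nu_c$ is supported on $\{0\}\cup[x_-,x_+]\subseteq[0,x_++1]$, this upgrades, on the intersection of the two probability-one events, to $\int f\,d\mu_n\to\int f\,d\nu_c$ almost surely.

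Finally I would evaluate $\int f\,d\nu_c$ directly from Definition 2. The atom of $\nu_c$ at $0$ (present only for $c>1$) contributes $f(0)=0$, while on $[x_-,x_+]$ the factor $x^{-1}$ in the density $\frac1{2\pi cx}\sqrt{(x_+-x)(x-x_-)}$ of $\kappa_c$ cancels the $x$ in $f(x)=x\log x$, leaving
\begin{equation*}
\int x\log x\,d\nu_c(x)=\frac1{2\pi c}\int_{x_-}^{x_+}\log x\,\sqrt{(x_+-x)(x-x_-)}\,dx,
\end{equation*}
which is finite (the integrand is bounded when $c\neq1$, and behaves like $\sqrt x\,\log x\to 0$ near $x_-=0$ when $c=1$). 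Substituting back into the reduction above gives $\lim_n\big(H(\rho_n)-\log n\big)=\log c-\frac1{2\pi c}\int_{x_-}^{x_+}\log x\,\sqrt{(x_+-x)(x-x_-)}\,dx$ with $x_\pm=(1\pm\sqrt c)^2$ almost surely, and hence $H(\rho_n)/\log n\to1$ almost surely. The main obstacle is exactly the one highlighted above — transferring convergence for the unbounded test function $x\log x$ — and it is the fourth-moment/edge-localization input that takes care of it.
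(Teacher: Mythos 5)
Your proof is correct and uses the same initial reduction as the paper — rewriting $H(\rho_n)-\log n=\log c-\frac{p}{cn}\int x\log x\,d\mu_n$ and passing to the limit — but at the crucial analytic step, namely transferring weak convergence to the unbounded test function $x\log x$, you take a genuinely different and in fact cleaner route. The paper sets $\eta(x)=-x\log x$, introduces a truncation $\eta_K$, and splits $\bigl|\int\eta\,d\mu_n-\int\eta\,d\nu_c\bigr|$ into three pieces $I_1^{(n)}+I_2^{(n)}+I_3^{(n)}$: the middle is controlled by weak convergence and boundedness of $\eta_K$, the third by dominated convergence, and the first by bounding $|\eta-\eta_K|\leq x^2$ on $\{x\geq K\}$ and then arguing $\int_K^\infty x^2\,d\mu_n\to 0$; that last step leans on the claim that $\int x^2\,d\mu_n\to\int x^2\,d\nu_c$, which the paper attributes to weak convergence alone even though it requires an extra input (tightness or spectrum localization) to be justified. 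You instead invoke the almost-sure edge convergence $cn\lambda_1(\rho_n)\to x_+$ from Theorem \ref{T2} (which needs only $\M|x_{11}|^4<\infty$, available in the standing hypotheses of the Proposition) to conclude that $\mu_n$ is almost surely eventually supported in a fixed compact interval, and then a single compactly supported bump function $g$ reduces the unbounded test function to a bounded continuous one to which Theorem \ref{T} applies directly. This makes the reliance on the fourth-moment hypothesis explicit, bypasses the second-moment-convergence claim entirely, and thus gives a tighter argument at precisely the point where the paper's proof is most delicate; the remaining evaluation of $\int x\log x\,d\nu_c$ (the atom at $0$ contributing nothing, the factor $x^{-1}$ in $d\kappa_c$ cancelling the $x$) and the deduction of $H(\rho_n)/\log n\to 1$ coincide with the paper's.
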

\begin{proof}[Proof] The proof is a series of rather simple calculations. For the sake of simplicity we will assume that $p=p(n)$ and write only $p$ throughout the proof. Using the definition of $H$ and applying algebraic transformations yield
	\begin{align}
		H(\rho_n)&= -\sum_{j=1}^p \lambda_j(\rho_n) \log \lambda_j(\rho_n)\nonumber \\
		& =-\frac{p}{cn}\cdot  \frac 1 p \sum_{j=1}^p cn\lambda_j(\rho_n) \log (cn \lambda_j(\rho_n)) + \log cn. \label{eqentr}
	\end{align}
	Notice that $\frac1{p}\sum_{j=1}^pcn\lambda_j(\rho_n) \log (cn\lambda_j(\rho_n))= \int \lambda \log\lambda\ d\mu_{n}(\lambda)$, where $\mu_n=\frac1{p}\sum_{j=1}^{p} \delta_{cn\lambda_j(\rho_n)}$, furthermore that $\eta:x\mapsto -x \log x$ with $\eta(0):=0$ is a continuous function for $x\geq 0$. Let $K>0$ and $\eta_K(x):=\max\{\eta(x),K\}$ for $x\geq0$, then 
	\begin{align}
		\left|\int \eta(x)d\mu_{n}(x) - \int \eta(x)d\nu_c(x) \right|\leq & \left| \int (\eta(x)- \eta_K(x))d\mu_{n}(x) \right| \nonumber \\ &+ \left| \int \eta_K(x)(d\mu_{n}(x)-d\nu_c(x)) \right| \nonumber\\ & +\left| \int (\eta_K(x)-\eta(x))d\nu_c(x)  \right|\nonumber\\ \label{conv_entr} &=I^{(n)}_1+I^{(n)}_2+I^{(n)}_3.
	\end{align}
	Now let $\veps>0$ be arbitrary. Since $\mu_{n}$ converges to $\nu_c$ weakly with probability one, and $\eta_K$ is a bounded, continuous function, $I^{(n)}_2<\veps/3$ if $n$ is large enough with probability one. \\
	Due to the definition of $\eta_K$ we have $\lim_{K\to \infty} \eta_K(x)=\eta(x)$ and $|\eta_K(x)|\leq |\eta(x)|$ for every $x\geq 0$. Moreover,  $\mathrm{supp}\ \nu_c$ being compact implies $\int |\eta(x)|d\nu_c(x)<\infty$. According to the dominated convergence theorem $I_3^{(n)}<\veps/3$ if $K$ is large enough. \\
	 It can be easily checked that $|\eta(x)-\eta_K(x)| \leq x^2$ for $x\geq K$ and $K\geq 1$, which means that \\ $I_1^{(n)}\leq \int_{K}^\infty x^2 d\mu_{n}(x)$. Weak convergence of $\mu_{n}$ with probability one implies $\int x^2 d\mu_{n}(x) \to \int x^2 d\nu_c $ with probability one. By writing $x^2= \min\{K, x^2\}+ (x^2 -K)\mathbb I_{(K,\infty)}(x)=f_1(x)+f_2(x) $, the function $f_1(x)$ is continuous and bounded, hence $\int f_1(x)d\mu_{n}(x) \to \int x^2 d\nu_c(x)$ with probability one if $K$ is such that $K>x_+^2$, therefore $\int f_2(x)d\mu_{n}(x)= \int_K^\infty x^2 d\mu_{n} - K \mu_{n}((K,\infty))\to 0$ with probability one. Weak convergence also implies $\mu_{n}((K,\infty))\to 0$ for any fixed $K>0$ with probability one, meaning that we have $\int_K^\infty x^2 d\mu_{n}(x)\to 0$, implying $\int|\eta(x)-\eta_K(x)|d\mu_{n}(x)\leq \int_K^\infty x^2 d\mu_{n}\to 0 $, thus $I_1^{(n)}<\veps/3$ with probability one if $n$ is large enough. \\
Summarizing the above arguments yields that the quantity in (\ref{conv_entr}) is less then $\veps$. After subtracting $\log n$ from $H( \rho_n)$ and taking the limit $n\to\infty$ we obtain
\begin{align*} \lim_n (H(\rho_n)- \log n)&= \log c -\lim_{n} \frac {p}{cn} \int x\log x\ d\mu_{n}\\ &= \log c - \int x\log x\ d\nu_c(x) \\ &= \log c - \frac{1}{2\pi c}\int_{x_-}^{x_+} \log x \sqrt{(x_+-x)(x-x_-)}\ dx \quad \mbox{ with prob. 1,} \end{align*}
due to the assumption $\lim_n p/n=c$. The second part of the proposition is a consequence of the first part and can be easily proved using equation (\ref{eqentr}).

\end{proof}
\begin{remark}
	Usually the entropy rate of a stochastic process $\{X_n,n\in\N\}$ is defined as $\lim_n \frac{H_n}{n}$, with $H_n=-\int p_n(x_1,\ldots,x_n)  \log p_n(x_1,\ldots,x_n) dx_1\ldots dx_n$ for continuous, and \\ $H_n=-\sum_{x_1,\ldots, x_n} \mathbb P(X_1=x_1,\ldots,X_n=x_n) \log \mathbb P(X_1=x_1,\ldots,X_n=x_n)$ for discrete random variables $X_1,X_2,\ldots$. \\
	In the case of this paper there is no trivial way, if any, to define a stochastic process \\ $\{X_n,n\in \N\}$ such that $H(X_1,\ldots,X_n)=H(\rho_n)$. The most natural way would be to define $(X_1,\ldots,X_n)$ so that they follow the same distribution as $(\lambda_1(\rho_n),\ldots,\lambda_n(\rho_n))$. If $F_n(x_1,\ldots,x_n)$ denotes the distribution function of $(X_1,\ldots,X_n)$ for any $n\geq 1$, then the following strong compatibility condition has to be satisfied
	$$ \forall  k\geq 0\ \ \forall x_1,\ldots,x_n \quad \int dF_{n+k}(x_1,\ldots,x_n,dx_{n+1},\ldots,dx_{n+k})=F_n(x_1,\ldots,x_n).$$
	It can be checked that this fails to happen even in the case of (Gaussian) Wishart matrices.  
	
\end{remark}
\section{Conclusion}
Nechita showed in \cite{Nechita} that the spectral asymptotics of random density matrices of the form $XX^\dagger/\mathrm{Tr}(XX^\dagger)$ coincide with that of $XX^\dagger$ after proper scaling, where the elements of $X$ are independent and their distribution is standard complex Gaussian. In this paper, the previously mentioned results are generalized for the same type of random density matrices, but for the case when $X$ comes from a larger class of random matrices.  \\
Since using the formula $XX^\dagger/\mathrm{Tr}(XX^\dagger)$ is a very simple way of simulating random density matrices, these results can be used to approximate properties like the spectral distribution, the location and distribution of the largest eigenvalue, and the von Neumann entropy of large dimensional random density matrices. \\
In the application section we have generalized results of Sommers and \. Zyczkowski by showing that random density matrices generate infinite entropy in the limit, but the production rate is logarithmic and surprisingly independent of the parameter $c$.  \\
An interesting further generalization of these results would be to consider random density matrices, where the columns of the generating $X$ matrix are independent, but the elements of a columns are not. Yaskov showed in \cite{Yaskov}, that assuming $X$ consists of independent copies of the isotropic $p$ dimensional (real) vector $\mathbf{x}_p$ the Marchenko-Pastur theorem is equivalent to a concentration of the quadratic form of the resolvent of $\frac1nXX^\dagger$. In light of the aforementioned result, it would be interesting to show whether or not the IID condition could be relaxed in Theorems \ref{T} and \ref{T2}.

\section*{Acknowledgements}
\vspace{3mm}
This work was supported by the Hungarian National Research, Development and Innovation Office (Project Nos. K124351 and 2017--1.2.1--NKP--2017--00001). \\
The author thanks O. K\'alm\'an, G. Michaletzky,  T. Kiss and T. Clark for their useful comments and observations, and G. T\'oth for bringing empirical density matrices to his attention. 

\section*{Appendix}
\vspace{3mm}
\begin{proof} [Proof of Lemma \ref{St-conv}. \cite{Yaskov}] Suppose $\mu_n$ converges to $\mu$ weakly with probability one, fix $\varepsilon>0$ and let $f(x)=\frac{1}{x+\varepsilon}$. Then $f:\R_{\geq0} \to \R_{\geq0}$ is a continuous bounded function, thus
	$$ S(\veps,\mu_n)= \int_0^\infty \frac{\mu_n(dx)}{x+\veps} \to \int_{0}^\infty \frac{\mu(dx)}{x+\veps}=S(\veps,\mu) \quad \mbox{with probability one.} $$
	Now suppose $\mathbb P (S(\veps,\mu_n)\ra S(\veps,\mu))=1$ for all $\veps>0$, then
	$$ \mathbb  P\bigg(S(\veps,\mu_n)\to S(\veps,\mu) \quad \forall\veps\in \mathbb Q\cap (0,\infty)\bigg)= 1. $$
	Let $\nu$ be an arbitrary finite measure supported on $\R_{\geq0}$ and fix $\delta>\eta>0$, then \\
	$| S(\eta,\nu)-S(\delta,\nu)|\leq |\eta-\delta|\cdot (\delta \eta)^{-1} \nu(\R_{\geq0})$. Note that $S(\veps,\mu_n)\to S(\veps,\mu)$ can be written as
	$$\forall m\in\mathbb N \ \exists N\in\mathbb N \ \mbox{such that } \forall n\geq N \ \ |S(\veps,\mu_n)-S(\veps,\mu)|<\frac1{3m} .$$
	Now for each $m$ and $\veps>0$ there is a $ q_{m,\veps}\in \mathbb Q$, such that $|q_{m,\veps}-\veps|<\frac1{3m\veps q_{m,\veps}}$, hence
	\begin{align*}
		|S(\veps,\mu_n)-S(\veps,\mu)|\leq& \ |S(\veps,\mu_n)-S(q_{m,\veps},\mu_n) |+ |S(q_{m,\veps},\mu_n)- S(q_{m,\veps},\mu) | \\ &+ |S(q_{m,\veps},\mu)-S(\veps,\mu) |<\frac1m,
	\end{align*}
	and this yields
	$$ \mathbb P (S(\veps,\mu_n)\to S(\veps,\mu) \quad \forall \veps>0)=1, $$
	implying ( Theorem 2.2 and Remark 2.3 in \cite{schilling2012bernstein} ) that $\overline \mu_n$ converges to $\overline \mu$ vaguely on every compact subset of $[0,\infty]$ with probability one. For a finite measure $\nu$ on $\R_{\geq0}$ the measure $\overline \nu$ is defined as
	$$ \overline \nu(B)=\int_0^\infty \frac{1}{x+1} \nu(dx) \quad \mbox{where B is a Borel set of }\R_{\geq0}. $$
	The function $f_z(x)=\frac{x+1}{x-z}$ with $f(\infty)=1$ is continuous on $[0,\infty]$ for all $z\in \mathbb C$ with $\mathit{Im}\ z>0$, hence
	$$ S(z,\mu_n)= \int_0^\infty \frac{1}{x-z}\mu_n(dx) = \int_0^\infty f_z(x) \mu_n(dx)\to \int_0^\infty f_z(x) \mu(dx)=S(z,\mu)  .$$
	By the standard Stieltjes continuity theorem (Theorem B.9 on page 515 in \cite{bai2010spectral}) this implies that $\mu_n$ converges to $\mu$ vaguely. For probability measures vague convergence is equivalent to weak convergence.
\end{proof}

\bibliographystyle{amsplain}
\bibliography{KM}

\end{document}